\definecolor{bordeaux}{RGB}{100,0,50}
\definecolor{darkblue}{RGB}{25, 25, 112}
\newtheorem{lemma}{Lemma}[section]
\newtheorem{theorem}[lemma]{Theorem}
\newtheorem{claim}[lemma]{Claim}
\titlespacing*{\section}{0pt}{0.8ex plus 0.2ex minus 0.2ex}{0.5ex}
\titlespacing*{\subsection}{0pt}{0.2ex}{0ex}
\setlist[enumerate]{topsep=2pt}
\newcommand\claimproofend{\renewcommand{\qedsymbol}{$\boxdot$}
\end{proof}
\renewcommand{\qedsymbol}{$\square$}}
\title{Packing subdivisions into regular graphs}
\author{Richard Montgomery\thanks{Mathematics Institute, University of Warwick, Coventry, UK. Email: {\tt
richard.montgomery@warwick.ac.uk}. Supported by the European Research Council (ERC) under the European Union Horizon 2020 research and innovation programme (grant agreement No.\ 947978).} \and Kalina Petrova\thanks{
Institute of Science and Technology Austria (ISTA), Klosterneuburg, Austria. Email: {\tt
kalina.petrova@ist.ac.at}. This project has received funding from the European Union’s Horizon 2020 research and innovation programme
under the Marie Sk{\l}odowska-Curie grant agreement No 101034413.\includegraphics[width=5.5mm, height=4mm]{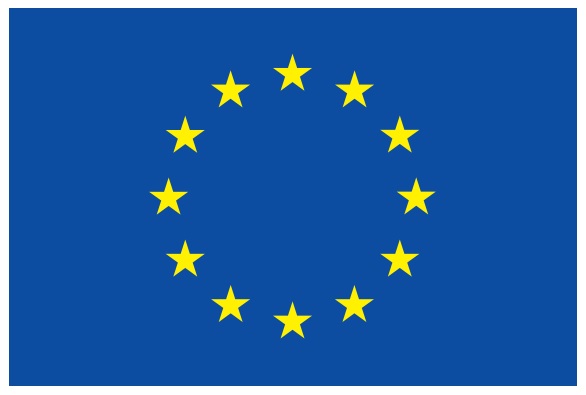}}
\and Arjun Ranganathan\thanks{Department of Mathematics, University College London, London, UK. Email:
 {\tt arjun.ranganathan.24@ucl.ac.uk}} \and Jane Tan\thanks{All Souls College, University of Oxford, Oxford, UK. Email: {\tt
jane.tan@maths.ox.ac.uk}}}
\date{}
\newcommand{\labelinthm}[1]{%
   \label{temp#1}
   \protected@write \@auxout {}{\string \newlabel{#1}{{\emph{\ref{temp#1}}}{\thepage}{\emph{\ref{temp#1}}}{temp#1}{}} }
}
\newcounter{propcounter}
\newcommand{\eps}{\varepsilon}
\newcommand{\bP}{\mathbb{P}}
\newcommand{\bE}{\mathbb{E}}
\begin{document}

\maketitle

\begin{abstract}
We show that, for any graph $F$ and $\eta>0$, there exists a $d_0=d_0(F,\eta)$ such that every $n$-vertex $d$-regular graph with $d \geq d_0$ has a collection of vertex-disjoint $F$-subdivisions covering at least $(1-\eta)n$ vertices. This verifies a conjecture of Verstra\"ete from 2002 and improves a recent result of Letzter, Methuku and Sudakov which additionally required $d$ to be at least polylogarithmic in $n$.
\end{abstract}


\section{Introduction}\label{sec:intro}

The operation of \emph{subdividing an edge} $uv$ of a graph is performed by replacing $uv$ with a new degree 2 vertex incident to $u$ and $v$. If a graph $G$ can be obtained from a graph $F$ by a sequence of edge subdivisions -- equivalently, by replacing edges of $F$ with internally vertex-disjoint paths -- then we say that \emph{$G$ is a subdivision of $F$} (an \emph{$F$-subdivision}, for short) or that \emph{$G$ is a topological $F$} (a \emph{$TF$}). The latter terminology reflects the fact that taking subdivisions preserves many topological properties of the graph, especially pertaining to embeddings. Indeed, the importance of subdivisions in graph theory was cemented early on by the famous theorem of Kuratowski~\cite{kuratowski1930probleme} from 1930 characterising planar graphs as the graphs containing no topological $K_5$ or $K_{3,3}$.

Extremal functions for the average degree forcing different subdivisions have been studied for more than 50 years, with early focus given to topological complete graphs as motivated, for example, by Hadwiger's conjecture. 
Let $d(t)$ be defined as the smallest $d$ such that every graph with average degree more than $d$ has a topological $K_t$. It is not obvious that $d(t)$ should always be finite, but this was established in a classical result of Mader \cite{mader1967homomorphieeigenschaften} in 1967. He furthermore conjectured (as, independently, did Erd\H{o}s and Hajnal~\cite{EH69}) that $d(t)$ is quadratic, to match the easy lower bound given by complete bipartite graphs. After Mader~\cite{mader1972} improved his original bound on $d(t)$ to one exponential in $t$, a rapid flurry of progress in 1994 resulted in the conjectured quadratic upper bound being established by Bollob\'as and Thomason~\cite{bollobas1998proof} using linkages and, in the same few weeks, by Koml\'os and Szemer\'edi~\cite{K-Sz-2} improving their work introducing sublinear expansion~\cite{K-Sz-1}. Sublinear expansion has since become an important tool in extremal graph theory for sparse graphs, on which more details can be found in the recent survey by Letzter~\cite{shohamsurvey}.

While only extremal functions for complete subdivisions were explicitly considered, more generally it follows very directly from Bollob\'as and Thomason's proof, and only slightly less directly from Koml\'os and Szemer\'edi's proof, that, for any $F$, any graph with no $F$-subdivision has average degree $O(|F|+e(F))$. In general, determining the correct asymptotics for the corresponding extremal function is a very interesting open problem. For certain sparse graphs $F$, some progress was made by Haslegrave, Kim and Liu \cite{haslegrave2022extremal}. For complete graphs $F$, the best known is that
\[
(1+o(1))\frac{9t^2}{64}\leq d(t) \leq (1+o(1))\frac{10t^2}{23},
\]
where the upper bound is due to K\"uhn and Osthus~\cite{kuhn2006extremal} and  the lower bound was observed by {\L}uczak (see, for example, \cite{K-Sz-2}) and comes from considering random bipartite graphs.

In 2002, Verstra\"ete \cite{verstraete02} observed that, when $F$ is fixed, all $d$-regular $n$-vertex graphs have a $TF$-packing covering at least $(1/2-o_d(1))n$ vertices. Here, a \emph{$TF$-packing in $G$} is a collection of pairwise vertex-disjoint subgraphs of $G$ which are each a topological $F$. Motivated by this observation, Verstra\"ete conjectured that, for each graph $F$ and $\eta>0$, there is some $d_0$ such that any $n$-vertex $d$-regular graph $G$ with $d\geq d_0$ contains a $TF$-packing covering at least $(1-\eta)n$ vertices. Here, the regularity condition is natural as, for example, the complete bipartite graph with $d$ vertices on one side and $n-d$ vertices on the other can contain vertex-disjoint subdivisions covering at most $O(d|F|)$ vertices, unless $e(F)=0$.

When $F$ is an acyclic graph, Verstra\"ete's conjecture already followed from a slightly earlier result of Kelmans, Mubayi and Sudakov \cite{kelmans2001asymptotically}. Furthermore, when including this conjecture in a problem list in 2003 (see \cite[Conjecture 3.4]{alon2003problems}), Alon noted that it is true when $F$ is a fixed cycle or unicyclic.
As K\"uhn and Osthus~\cite{kuhn2005packings} observed in 2005, when $G$ is dense (i.e., when $d=\Omega(n)$) the conjecture follows from their work finding vertex-disjoint (non-subdivided) copies of any bipartite graph which cover all but extremely few of the vertices in dense regular graphs.

Substantial progress was made very recently by Letzter, Methuku
and Sudakov \cite{LMS25}, who proved that Verstra\"ete's conjecture holds with the additional condition that $d\geq (\log n)^{130}$. This was an application of their work on nearly-Hamilton cycles in sublinear expanders. For more details on this, see \cite{LMS25}, which also has an excellent summary of related work, including the rich literature on packing (non-subdivided) graphs into regular graphs.

Our main theorem verifies Verstra\"ete's conjecture, as follows.

\begin{theorem}\label{thm:packing}
For any $F$ and $\eta>0$, there is some $d_0$ such that the following holds for each $n\geq d\geq d_0$. Every $d$-regular $n$-vertex graph contains a vertex-disjoint collection of subdivisions of $F$ covering at least $(1-\eta)n$ vertices.
\end{theorem}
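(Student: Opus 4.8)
The plan is to reduce the packing problem to a more robust ``routing'' problem and then solve the latter using sublinear-expansion techniques, now without the polylogarithmic lower bound on $d$.

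\textbf{Reductions and anatomy of a subdivision.} First I would reduce to the case that $F$ is connected with a vertex of degree at least $3$: the all-forest case is already known (Kelmans--Mubayi--Sudakov) and the cycle/unicyclic case was noted by Alon, while a disconnected $F$ follows from the connected case applied to a connected supergraph $\hat F\supseteq F$ obtained by adding finitely many edges between the components, provided we keep those added edges \emph{unsubdivided} in each $T\hat F$ and then delete them (this recovers a $TF$ on the same vertex set). Vertices of degree at most $2$ in $F$ are absorbed into the subdividing/pendant paths. So fix a connected $F$ with $f=|V(F)|$, $m=|E(F)|$, and note the key structural point: every $F$-subdivision splits into a bounded \emph{core} --- the $f$ branch vertices together with the first and last edge of each subdividing path, hence at most $f+2m$ vertices --- plus $m$ internally disjoint paths of essentially arbitrary lengths joining prescribed pairs of ``stub'' endpoints of the cores. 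Hence, fixing a large length parameter $L=L(F,\eta)$, it is enough to find (i) a family of pairwise disjoint cores occupying in total at most $\tfrac{\eta}{3}n$ vertices, and (ii) a family of pairwise disjoint paths, each of length between $L$ and $2L$, realising all the required stub--stub connections and covering all but at most $\tfrac{\eta}{3}n$ further vertices. Crucially we only need a $(1-\eta)$-cover, not a perfect one, so no absorption step is required.

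\textbf{Decomposition and placement of cores.} Using a sublinear-expander decomposition in the spirit of Koml\'os--Szemer\'edi, I would partition $V(G)=V_0\cup V_1\cup\dots\cup V_k$ with $|V_0|\le \tfrac{\eta}{6}n$, where each $G[V_i]$ is connected, has minimum degree at least $d/2$, and is a sublinear expander with good parameters; the mechanism is that repeatedly removing sparse cuts destroys the degree of only few vertices, which are discarded into $V_0$. Within each piece $W=V_i$ one then greedily selects a maximal, suitably spread-out family of vertex-disjoint cores so that (using that $G[W]$ still has large minimum degree and every small set expands) the cores occupy only a small constant fraction of $W$ while every remaining vertex lies within bounded distance of some stub.

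\textbf{Routing, and the main obstacle.} What remains, inside each expanding piece with its planted cores, is to link up the stubs by internally disjoint paths of length about $L$ covering all but a small fraction of the vertices --- essentially a ``near-spanning linkage'' problem. In sublinear expanders one builds connections by the standard ball-growing / short-expansion arguments, but the real difficulty --- precisely where Letzter--Methuku--Sudakov needed $d\ge(\log n)^{130}$ --- is doing this \emph{densely enough} to cover almost all of the piece, since at constant degree a single near-spanning path or cycle need not exist. The new ingredient I would aim for is to avoid routing through one global near-Hamilton structure and instead localise: refine the decomposition (or the choice and placement of cores) so that the routing is performed on pieces whose minimum degree is comparable to their size, which are genuine bounded-degree expanders admitting near-spanning path systems with prescribed endpoints via a P\'osa-type rotation--extension argument, while the vertices for which this cannot be arranged are again few and swept into $V_0$. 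Assembling the cores with the routing paths produces the disjoint $F$-subdivisions, and the vertex count gives coverage at least $(1-\eta)n$ after rescaling $\eta$. I expect this routing/covering step to be the main technical obstacle: controlling the interaction between the planted cores and the path system, and in particular guaranteeing that the pieces on which one routes genuinely admit near-spanning path covers with the required endpoints at constant degree.
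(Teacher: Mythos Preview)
Your proposal correctly identifies the difficulty but does not resolve it. You explicitly flag the routing/covering step as ``the main technical obstacle'' and then offer only a heuristic: refine the decomposition so that routing happens on pieces whose minimum degree is comparable to their size. But for constant $d$ this would require pieces of order $O(d)$, and a generic $d$-regular graph (say a random $d$-regular graph, or a high-girth expander) simply does not decompose, even approximately, into such near-clique-like pieces; the sublinear-expander decomposition will typically return one large piece of order $\Theta(n)$ and minimum degree $\Theta(d)$, putting you right back in the Letzter--Methuku--Sudakov regime where a polylogarithmic degree is needed for their near-Hamilton-cycle machinery. The P\'osa rotation--extension idea you invoke also needs degree growing with the piece size to guarantee near-spanning path systems. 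So the central step of your plan is a restatement of the problem, not a solution.

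By contrast, the paper avoids expansion and long-path routing entirely. It first sets aside a small random set $W$, covers most of $G-W$ by short vertex-disjoint paths of a fixed length $m$ (via a local-lemma partition and matchings between consecutive layers), and then builds an \emph{auxiliary graph} $L$ on a tiny random subset of $W$ in which each edge encodes one of the leftover paths together with edges from its two endpoints into $W$. The point is that $L$ has bounded order relative to the number of leftover paths, hence large average degree, so Mader's theorem gives an $F$-subdivision in $L$; unwinding it yields an $F$-subdivision in $G$ that swallows whole paths and uses almost nothing from $W$. Maximality then forces almost all paths to be used. This replaces your ``near-spanning linkage in an expander'' step with a one-line application of Mader's theorem in a dense auxiliary graph, which is why no lower bound on $d$ beyond a constant is needed.
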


It is necessary to allow $\eta > 0$ in such a statement. The following example provides an explicit lower bound on $\eta$ in terms of $d$ for most graphs $F$. Let $H$ be obtained from $K_{d+3}$ as follows. First, remove a Hamilton cycle. Then, for distinct vertices $x$, $y$ and $z$ such that $xy,yz$ are edges but $xz$ is not, remove $xy,yz$ and add $xz$. This graph has $d+3$ vertices, $y$ has degree $d-2$, and all other vertices have degree $d$. Let $H_1, \dots, H_d$ be vertex-disjoint copies of $H$ with $y_1, \dots, y_d$ corresponding to $y$. Let $G$ be the union of $H_1, \dots, H_d$, two other vertices $u$ and $v$, and the edges $uy_i$ and $vy_i$ for all $i\in[d]$. Now suppose $F$ is $2$-vertex-connected and that $K_{2,d}$ contains no $F$-subdivision -- for instance, any $2$-vertex-connected $F$ that contains $C_\ell$ for $\ell \geq 5$. Then, as neither $u$ nor $v$ could be in an $F$-subdivision in $G$, any $TF$-packing in $G$ can cover at most $(1-2/|G|)|G|$ vertices. Since $G$ is $d$-regular and $|G|=d(d+3)+2$, in \cref{thm:packing} we require $\eta\geq 2/(d(d+3)+2)$. Where this applies, this slightly improves a similar example of K\"uhn and Osthus~\cite{kuhn2005packings} which shows that $\eta > 1/2d^2$ is necessary for odd $d$ if all the vertices in $F$ are in some cycle.

The proof of \cref{thm:packing} is outlined in \cref{sec:outline}. In \cref{sec:discussion}, we discuss several variants in the context of some further related literature.
The details of the proof are then given across Sections~\ref{sec:prelim} and~\ref{sec:mainproof}: in \cref{sec:prelim} we recount and adapt some known tools to obtain a couple of preliminary results and in \cref{sec:mainproof} we give the main argument.


\subsection{Proof outline}
\label{sec:outline}

We now sketch the proof of \cref{thm:packing}, as illustrated in \cref{fig:outline}. Let $G$ be an $n$-vertex $d$-regular graph. We will split $V(G)$ randomly into $V\cup W$ (where $W$ is a small set), partition $G[V]$ mostly into a collection $\mathcal{P}$ of vertex-disjoint paths, and then find maximally many vertex-disjoint $F$-subdivisions with very many more vertices in $V$ than $W$. Each such $TF$ intersects a path in $\mathcal{P}$ only if it contains the path entirely. If there are many paths in $\mathcal{P}$ not covered by our collection of $F$-subdivisions, then there will be enough edges between the endvertices of the unused paths and the uncovered vertices in $W$ (which is most of $W$ due to the extreme imbalance of the $F$-subdivisions between $V$ and $W$) to allow us to find another $F$-subdivision (by appealing to a certain auxiliary graph whose vertex set is a random subset of the uncovered vertices in $W$).

To give more details: for a small constant $p$, we split $V(G)=V\cup W$ by placing each vertex into $W$ independently at random with probability $p$, and otherwise placing it in $V$. Using the local lemma (see Section~\ref{sec:localpartition}), we can assume that each vertex has roughly the expected degree into $V$ and $W$, $(1-p)d$ and $pd$ respectively, and that $V$ and $W$ are roughly the expected size, $(1-p)n$ and $pn$ respectively.

We then partition all but at most $pn$ vertices of $G[V]$ into a vertex-disjoint collection $\mathcal{P}$ of paths of length $m-1$, where $1/m\ll p$, so that every vertex has at most $4d/m$ neighbours among their endvertices. To do this we use an effective strategy created in part to approach the linear arboricity conjecture (see, e.g., Section~1.1 in~\cite{ferber2020towards} which builds on~\cite{alon2001linear, kelmans2001asymptotically}), as also used recently in~\cite{montgomery2024approximate,LMS25}. This is carried out in Section~\ref{sec:partpartition}, but, in brief, we further partition $V$ into $m$ roughly equal sized pieces $V_1,\ldots,V_m$ using the local lemma so that each $G[V_i,V_{i+1}]$ is an almost-regular bipartite graph and thus contains a large matching, $M_i$ say. Putting the matchings $M_1,\ldots,M_{m-1}$ together, they will each be large enough that this union will mostly consist of our desired paths of length $m-1$. Furthermore, such paths will have endvertices only in $V_1\cup V_m$, so from the properties of our partition we will have that all the vertices have few neighbours among them.

The focus is then on incorporating the majority of paths in $\mathcal{P}$ into $F$-subdivisions while never using many vertices in $W$; since this will cover most of $V$, which in turn covers most of $G$ already, we do not also need to cover most of the vertices in the small set $W$. To see how we find such $F$-subdivisions, let us first describe how to find just one $F$-subdivision in $G$ of our desired kind.

\begin{figure}
\centering
\includegraphics[scale=0.5]{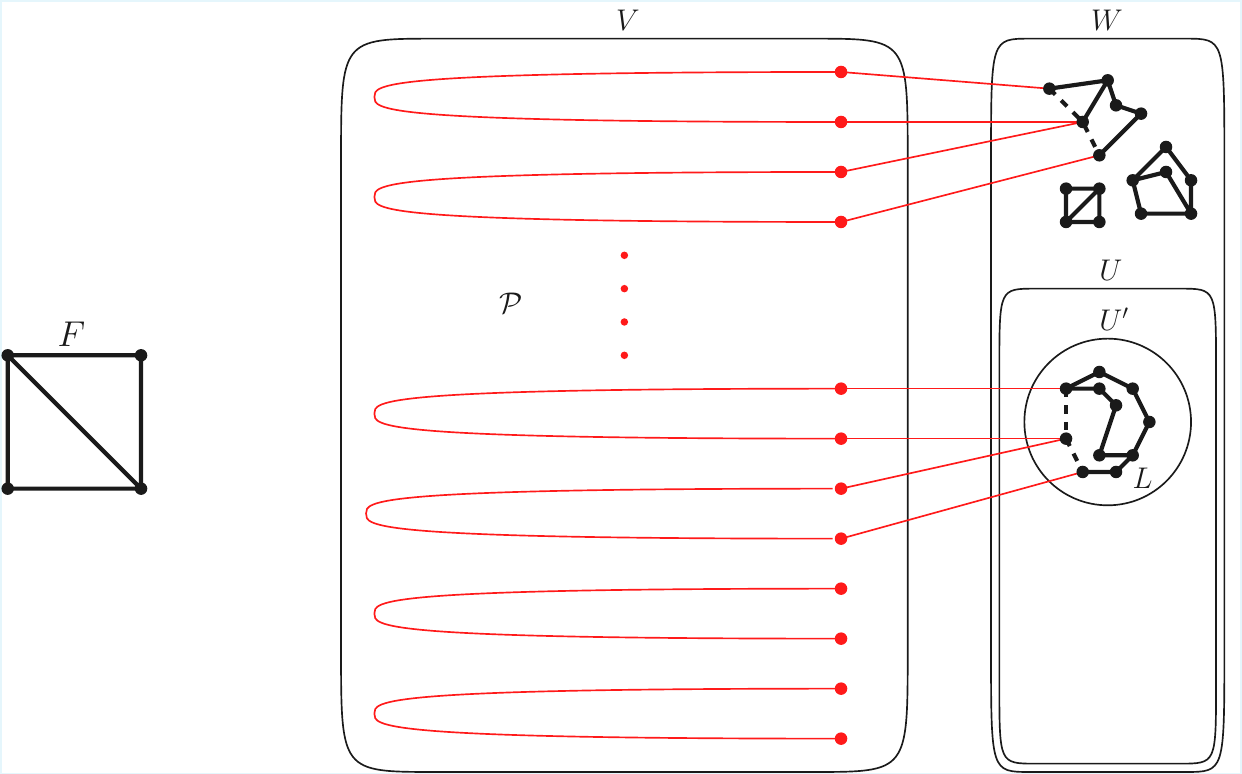}
\caption{Our proof strategy with $F=K_4-e$ and $V(G)$ partitioned into $V$ and $W$, where most of $G[V]$ is partitioned into the collection $\mathcal{P}$ of paths. Subdivisions of $F$ in previous iterations of the auxiliary graph $L$ are drawn in bold, and occupy $W\setminus U$. Each bold edge corresponds to a distinct path in $V$ shown in red (we only show some of these correspondences, indicated by a dashed line). The current auxiliary graph $L$ is drawn, with vertex set $U'\subset U$.} 
\label{fig:outline}
\end{figure}

Take an auxiliary graph $L$ with vertex set $W$ and as many edges as possible subject to the condition that $E(L)$ can be matched into $\mathcal{P}$ such that if $xy\in E(L)$ is matched to $P_{xy}$ then $xP_{xy}y$ is a path in $G$ (for some direction through $P_{xy}$). Then, if we can find an $F$-subdivision $F'$ in $L$, we could replace each edge $xy\in E(L)$ with $xP_{xy}y$ to get an $F$-subdivision which uses many more vertices in $V$ than in $W$, and either uses all of a path from $\mathcal{P}$ or none of it. However, $|\mathcal{P}|$ will be much smaller than $|W|$, so $L$ may have average degree below 1. Instead, then, we take a much smaller random subset $U'\subset W$ such that the endvertices of paths in $\mathcal{P}$ still have some neighbours in $U'$. Forming $L$ on $U'$ rather than $W$ will ensure that $L$ has enough edges relative to $|L|$ to contain an $F$-subdivision by the following result of Mader (mentioned along with subsequent improvements in the introduction).

\begin{theorem}[Mader~\cite{mader1967homomorphieeigenschaften}]
\label{thm:mader} For each graph $F$ there is some $d_F$ such that any graph with average degree at least $d_F$ contains an $F$-subdivision.
\end{theorem}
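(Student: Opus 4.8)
The plan is to prove the contrapositive in the form of a stronger, inductive statement: for each graph $F$ there is a constant $d_F$ such that if a graph $H$ has $e(H) \geq d_F \cdot |H|$ (equivalently average degree at least $2d_F$; the exact constant is immaterial), then $H$ contains an $F$-subdivision. The natural induction is on the number of edges of $F$. The base case $e(F)=0$ (so $F$ is an independent set) is trivial: any graph with at least $|F|$ vertices — which is guaranteed once $d_F \geq 1$, say — contains $F$ as a subgraph hence as a subdivision. For the inductive step, pick an edge $e=uv$ of $F$ and set $F' = F - e$; by induction there is a constant $d_{F'}$ making the statement hold for $F'$.

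The key step is a \emph{cleaning} argument that lets us recover the deleted edge as a long path. First, by repeatedly deleting vertices of degree less than $d_F$, we may pass to a subgraph $H' \subseteq H$ with minimum degree at least $d_F$; this does not destroy the edge count condition if $d_F$ was chosen at least twice the threshold we need, since deleting a low-degree vertex removes fewer than $d_F$ edges and one vertex, so the average-degree ratio does not drop. In a graph of minimum degree at least $d_F$, every two vertices are joined by a path of length at least, say, $|F|$ (indeed, greedily extend a path: from the current endpoint, at least $d_F - |F| + 1 > 0$ neighbours lie outside the current path of length $<|F|$, so we can always extend, and a standard argument also lets us connect prescribed endpoints). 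More carefully, I would first find the $F'$-subdivision inside a part of $H'$ and then use the remaining high-degree vertices to route the path for $e$; to make room, I instead choose $d_F$ large enough (as a function of $d_{F'}$ and $|F|$) that $H'$ still has many more than $d_{F'} \cdot |H'|$ edges after we set aside a small ``reservoir'' of vertices to be used exclusively for rerouting.

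Concretely, the order of steps is: (i) reduce to $H'$ with $\delta(H') \geq d_F$; (ii) if $|H'|$ is small — bounded in terms of $|F|$ — then $H'$ has average degree comparable to $|H'|$, so it is essentially complete and trivially contains a $TF$, so assume $|H'|$ is large; (iii) pick any edge $uv$ of the $TF'$ we will build and, using that $\delta(H')$ is large, argue that a $TF'$ can be found avoiding all but a vanishing fraction of vertices (apply induction to $H'$ minus a random/auxiliary reservoir set $R$ with $|R|$ linear in $|H'|$; the ratio $e/|V|$ only changes by a bounded factor); (iv) inside $R \cup \{u,v\}$, which still has minimum degree into $R$ close to $d_F$, find an internally-$R$-contained path of length at least $|F|$ between the images of $u$ and $v$ by the greedy extension argument above; (v) glue to get the $TF$. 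The main obstacle is bookkeeping the constant: we need $d_F$ large enough that (a) after stripping low-degree vertices and (b) after removing the reservoir $R$, the surviving graph still meets the threshold $d_{F'}$ for the induction hypothesis, yet (c) $R$ itself retains enough internal degree to host the connecting path. This is a routine but careful choice $d_F = C(|F|) \cdot d_{F'}$ for a suitable function $C$; all the genuine content is in the elementary fact that large minimum degree yields long paths between prescribed endpoints, which is where I would spend the most care.
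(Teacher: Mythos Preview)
The paper does not prove Theorem~\ref{thm:mader}; it is quoted as a classical result of Mader and used as a black box, so there is no proof in the paper to compare against. I will therefore assess your plan on its own merits.

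Your induction on $e(F)$ is natural, but step (iv) contains a genuine gap. Having found a $TF'$ inside $H'\setminus R$, you assert that the images $u',v'$ of the endpoints of the deleted edge can be joined by a path through $R$, appealing to a ``greedy extension argument.'' Greedy extension from a vertex of large degree produces a long path \emph{starting} at that vertex; it does not produce a path \emph{ending} at a prescribed second vertex, and your parenthetical ``a standard argument also lets us connect prescribed endpoints'' is precisely the non-trivial content of the theorem. High minimum degree does not imply high connectivity, so $G[R]$ need not even be connected. Concretely, let $H'$ be two $N$-cliques joined by a single edge $ab$: if $a,b\notin R$ then $G[R]$ has two components, while the $TF'$ supplied by induction in $H'\setminus R$ may well use the bridge $ab$ and place $u'$ and $v'$ in different cliques, after which no $u'$--$v'$ path through $R$ exists. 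Your inductive hypothesis only guarantees \emph{some} $TF'$, with no control over where its branch vertices land, so you cannot simply insist on a more convenient copy without strengthening the statement you are inducting on.

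The usual fix is to first pass to a subgraph of large \emph{connectivity}, not merely large minimum degree --- Mader himself proved that sufficiently large average degree forces a subgraph of any prescribed connectivity --- and only then build the subdivision, so that routing paths between prescribed pairs is supported by Menger-type arguments rather than by a bare degree condition. Your plan is missing exactly this connectivity-boosting step.
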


Suppose, then, that we have a maximal vertex-disjoint collection $\mathcal{F}$ of $F$-subdivisions created in such a manner. Let $\mathcal{P}'$ be the collection of paths in $\mathcal{P}$ not used in any of these subdivisions. It will be easy to argue that most of the endvertices of paths in $\mathcal{P}'$ have plenty of neighbours in the set, $U$ say, of uncovered vertices in $W$. If there are many paths in $\mathcal{P}'$, then we form the auxiliary graph $L$ as above but with a small random subset $U'\subset U$ and $\mathcal{P}'$ in place of $\mathcal{P}$. Theorem~\ref{thm:mader} then implies  $L$ contains an $F$-subdivision, contradicting the maximality of $\mathcal{F}$. Thus, our $F$-subdivisions will cover most of the paths in $\mathcal{P}$ and hence most of the vertices in $G$.

The above sketch works well for graphs $F$ with $e(F)\geq 1$ and no isolated vertices; Theorem~\ref{thm:packing} is stated as \cref{lem:no_isolated_vertices_packing} in this case. For general $F$ with $e(F)\geq 1$, we use \cref{lem:no_isolated_vertices_packing} to pack most of $G$ with subdivisions of $F$ with any isolated vertices removed, having originally reserved some vertices. Then, perhaps removing some of the smaller subdivisions in the initial packing, we will have enough vertices to add as isolated vertices to make the subdivisions into $F$-subdivisions.


\subsection{Further discussion}
\label{sec:discussion}

We made no attempt to optimise the quantitative aspects of \cref{thm:packing}, but one can see from our proof that $d_0$ can be taken as some polynomial of $\eta^{-1}$ and $|F| + e(F)$. This type of dependence is inevitable, as the discussion in the paragraph after \cref{thm:packing} shows that $\eta = \Omega(1/d^2)$ is needed, and if $F = K_t$, then a complete bipartite graph $K_{s,s}$ with $s = \binom{\lfloor t/2\rfloor }{2} - 1$ contains no subdivision of $F$. In line with the average degree needed to guarantee a subdivision of $F$, one may suspect that the correct dependency of $d_0$ on $F$ might be $d_0 = O(|F| + e(F))$. This is not sufficient for our proof and we leave it as an open question.

An analogue of \cref{thm:packing} also holds for edge-disjoint subdivisions, that is, that every regular graph $G$ with high enough degrees (relative to a fixed $F$) has a collection of edge-disjoint subdivisions of $F$ that covers almost all of the edges of $G$. This, indeed, follows trivially from Theorem~\ref{thm:mader} which implies the removal of a maximal such collection leaves a graph with low average degree. Our methods could likely find such an edge-disjoint collection of $F$-subdivisions which moreover consists of the union of vertex-disjoint collections covering almost all of the vertices of $G$ (see \cite[Lemma~3.1]{montgomery2024approximate} for how to modify Lemma~\ref{lem:mostlydecompintopaths}), but we do not complicate our proofs to achieve this.

When studying subdivisions of graphs, it is natural to consider restrictions on the lengths of paths subdividing the original edges (the \emph{subdivision-paths}). For example, a subdivision $G$ of $F$ is called \emph{balanced} if every edge in $F$ is subdivided the same number of times to obtain $G$, i.e., every subdivision-path has the same length. Thomassen~\cite{thomassen1984subdivisions} conjectured that, for every $t$, there is some $d_0$ such that every graph with average degree at least $d_0$ has a balanced subdivision of $K_t$. This was confirmed in 2023 by Liu and Montgomery \cite{LM23}, implying an extremal function for finding balanced subdivisions of a fixed $F$. Indeed, it is possible even to find two isomorphic balanced subdivisions \cite{FHLPW23, LTWY23} using only a constant average degree condition.

We cannot expect a $TF$-packing of isomorphic balanced subdivisions if $F$ has a cycle, since for instance this would not be possible in a disjoint union of regular graphs with small size or large girth. However, dropping the isomorphic requirement makes a packing possible. In our proof, the lengths of subdivision-paths in the final packing are controlled by (in fact, equal to a multiple of) the lengths of subdivision-paths taken in the auxiliary graph $L$. Thus, replacing Mader's theorem by a balanced version produces a balanced $TF$-packing that covers nearly all the vertices of $G$. By contrast, the subdivisions in the packings found in \cite{LMS25} are highly unbalanced; one edge is replaced by a very long path, whilst all others are replaced by short paths.

Our proof can also naturally obtain $TF$-packings in which every subdivision-path has length divisible by a fixed $q\in \mathbb{N}$. The subdivision-paths constructed in the sketch in Section~\ref{sec:outline} are divisible by $m+1$, where $m$ is a parameter that can be easily chosen so that $q|(m+1)$. This may be of interest in relation to the line of work on conditions that force subdivisions with prescribed congruence relations on subdivision-path lengths (see, for example, \cite{Thomassen83, AK21, DDS24}). On the other hand, it is easy to see that not all congruence conditions can be realised, for instance in graphs which have no odd cycles.  Finally, here, we note that by applying the results of~\cite{montgomery2015logarithmically} in place of Theorem~\ref{thm:mader}, we could ensure the subdivisions found in Theorem~\ref{thm:packing} each have size $O_{|F|}(\log n)$.


\section{Preliminaries}
\label{sec:prelim}

For $a,b,c\in\mathbb{R}$, we say $a=(1\pm b)c$ if $(1-b)c\leq a\leq (1+b)c$. We write $x\ll y$ to mean that for all choices of $y>0$ there exists some $x_0>0$ such that the statement in question holds for all $x\le x_0$. For any $n\in \mathbb{N}$, let $[n] := \{1,2,\dots,n\}$.
A graph $G$ has $|G|$ vertices, $e(G)$ edges, and maximum degree $\Delta(G)$. For a vertex $v\in V(G)$, $N_G(v)$ is the set of neighbours of $v$ and $d_G(v)$ is the degree of $v$, that is, $d_G(v)=|N_G(v)|$. Given a subset of vertices $U\subseteq V(G)$, define $d_G(v,U) := |N_G(v) \cap U|$, and let $G[U]$ denote the subgraph of $G$ induced by $U$, with $G-U := G[V(G)\setminus U]$. For any two disjoint subsets $X,Y\subseteq V(G)$, let $G[X,Y]$ denote the induced bipartite subgraph of $G$ with vertex classes $X$ and $Y$.


\subsection{Partitioning and the local lemma}\label{sec:localpartition}
To partition vertices, we will use the local lemma of Lov\'asz (see~\cite{alon2016probabilistic}) and a standard version of Chernoff's bound (see, for example, \cite[Corollary 2.2 and Theorem 2.10]{janson2011random}), as follows. %
\begin{lemma}[Lov\'asz local lemma]\label{Lemma_local_symmetric} Let $B_1,B_2,\dots, B_n$ be events in an arbitrary probability space. Suppose that, for each $i\in [n]$, the event $B_i$ is mutually independent of all but at most $\Delta$ of the other events and $\mathbb{P}(B_i)\leq p$.
 Then, if $ep(\Delta+1)\leq 1$, the probability that none of the events $B_i$, $i\in [n]$, occurs is strictly positive.
\end{lemma}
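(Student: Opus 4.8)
The plan is to deduce the symmetric statement from the general (asymmetric) Lov\'asz local lemma, which I would prove directly by induction. For each $i\in[n]$, fix a set $N(i)\subseteq[n]\setminus\{i\}$ with $|N(i)|\le\Delta$ such that $B_i$ is mutually independent of the family $\{B_j: j\notin N(i)\cup\{i\}\}$; such a set exists by hypothesis. Put $x:=1/(\Delta+1)$. Since $(1-\tfrac1{\Delta+1})^{\Delta}>1/e$ and $ep(\Delta+1)\le1$, we get $\mathbb{P}(B_i)\le p\le \tfrac1{e(\Delta+1)}\le x(1-x)^{\Delta}\le x\prod_{j\in N(i)}(1-x)$, so it suffices to prove the general claim: if each $B_i$ satisfies $\mathbb{P}(B_i)\le x_i\prod_{j\in N(i)}(1-x_j)$ for some $x_i\in(0,1)$, then $\mathbb{P}\bigl(\bigcap_{i\in[n]}\overline{B_i}\bigr)\ge\prod_{i\in[n]}(1-x_i)>0$.

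The core is the following assertion, proved by induction on $|S|$: for every $i\in[n]$ and every $S\subseteq[n]\setminus\{i\}$ with $\mathbb{P}\bigl(\bigcap_{j\in S}\overline{B_j}\bigr)>0$, we have $\mathbb{P}\bigl(B_i\mid\bigcap_{j\in S}\overline{B_j}\bigr)\le x_i$. The case $S=\emptyset$ is immediate. For the inductive step, write $S=S_1\cup S_2$ with $S_1=S\cap N(i)$ and $S_2=S\setminus N(i)$, and expand
\[
\mathbb{P}\Bigl(B_i\,\Big|\,\bigcap_{j\in S}\overline{B_j}\Bigr)
=\frac{\mathbb{P}\bigl(B_i\cap\bigcap_{j\in S_1}\overline{B_j}\ \big|\ \bigcap_{k\in S_2}\overline{B_k}\bigr)}
{\mathbb{P}\bigl(\bigcap_{j\in S_1}\overline{B_j}\ \big|\ \bigcap_{k\in S_2}\overline{B_k}\bigr)}.
\]
The numerator is at most $\mathbb{P}\bigl(B_i\mid\bigcap_{k\in S_2}\overline{B_k}\bigr)=\mathbb{P}(B_i)\le x_i\prod_{j\in N(i)}(1-x_j)$, using that mutual independence of $B_i$ from $\{B_k:k\in S_2\}$ (valid since $S_2\cap N(i)=\emptyset$) forces independence from the event $\bigcap_{k\in S_2}\overline{B_k}$. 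For the denominator, list $S_1=\{j_1,\dots,j_r\}$ with $r\le\Delta$, apply the chain rule, and bound each factor $\mathbb{P}\bigl(\overline{B_{j_\ell}}\mid\bigcap_{\ell'<\ell}\overline{B_{j_{\ell'}}}\cap\bigcap_{k\in S_2}\overline{B_k}\bigr)\ge1-x_{j_\ell}$ by the inductive hypothesis applied to $B_{j_\ell}$, whose conditioning set has size $(\ell-1)+|S_2|<|S|$; this gives a denominator at least $\prod_{\ell=1}^r(1-x_{j_\ell})\ge\prod_{j\in N(i)}(1-x_j)>0$, and the quotient is $\le x_i$. The same chain-rule estimate, applied to arbitrary $T\subseteq[n]$, shows $\mathbb{P}\bigl(\bigcap_{j\in T}\overline{B_j}\bigr)\ge\prod_{j\in T}(1-x_j)>0$, so every conditional probability appearing above is genuinely well defined; one runs this together with the main induction.

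Finally, the chain rule and the assertion give $\mathbb{P}\bigl(\bigcap_{i=1}^{n}\overline{B_i}\bigr)=\prod_{i=1}^{n}\mathbb{P}\bigl(\overline{B_i}\mid\bigcap_{j<i}\overline{B_j}\bigr)\ge\prod_{i=1}^{n}(1-x_i)>0$, and taking $x_i=1/(\Delta+1)$ makes this $(1-\tfrac1{\Delta+1})^{n}>0$, as required. The one delicate point is the bookkeeping in the induction: splitting $S$ along the dependency neighbourhood $N(i)$ so that the numerator collapses to $\mathbb{P}(B_i)$ by mutual independence, while the denominator is handled factor by factor with the inductive hypothesis on strictly smaller conditioning sets, and simultaneously verifying that no event one conditions on has probability zero. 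Everything else is a one-line computation.
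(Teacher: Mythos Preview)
Your proof is correct and follows the standard derivation of the symmetric local lemma from the asymmetric version via the induction on the size of the conditioning set, exactly as in Alon--Spencer. Note, however, that the paper does not give its own proof of this lemma: it is stated as a black box with a citation to~\cite{alon2016probabilistic}, so there is no in-paper argument to compare against. One tiny quibble: your choice $x=1/(\Delta+1)$ requires $\Delta\ge 1$ to have $x\in(0,1)$; the case $\Delta=0$ is of course trivial (all events are mutually independent and each has probability at most $1/e<1$), but strictly speaking it should be mentioned separately.
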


\begin{lemma}[Chernoff's bound]\label{lem:chernoff}
Let $X$ be a random variable with mean $\mu$ which is binomially distributed.
Then, for any $0<\gamma<1$, we have that $\mathbb{P}(|X-\mu|\geq \gamma \mu)\leq 2e^{-\mu \gamma^2/3}$.
\end{lemma}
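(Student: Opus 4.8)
The plan is to use the standard exponential-moment (Chernoff) argument. Write $X=\sum_{i=1}^{n}X_{i}$ as a sum of independent Bernoulli random variables, each with success probability $\mu/n$, so that $\mathbb{E}[X]=\mu$. For the upper tail I would fix a parameter $t>0$ and apply Markov's inequality to the nonnegative random variable $e^{tX}$, which gives $\mathbb{P}(X\ge a)\le e^{-ta}\,\mathbb{E}[e^{tX}]$ for every real $a$. By independence and the inequality $1+x\le e^{x}$,
\[
\mathbb{E}[e^{tX}]=\prod_{i=1}^{n}\Big(1+\tfrac{\mu}{n}(e^{t}-1)\Big)\le e^{\mu(e^{t}-1)},
\]
so setting $a=(1+\gamma)\mu$ and choosing the optimal $t=\ln(1+\gamma)$ yields
\[
\mathbb{P}\big(X\ge(1+\gamma)\mu\big)\le\left(\frac{e^{\gamma}}{(1+\gamma)^{1+\gamma}}\right)^{\!\mu}.
\]

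Next I would bound the base of this power by $e^{-\gamma^{2}/3}$ for all $0<\gamma<1$; equivalently, that $g(\gamma):=(1+\gamma)\ln(1+\gamma)-\gamma-\gamma^{2}/3\ge 0$ on $(0,1)$. This is an elementary calculus check: $g(0)=0$, and a short computation using the Taylor expansion of $\ln(1+\gamma)$ shows $g'(\gamma)\ge 0$ on the relevant range. Symmetrically, for the lower tail I would run the same argument on $-X$ (that is, with a parameter $t<0$), obtaining $\mathbb{P}(X\le(1-\gamma)\mu)\le\big(e^{-\gamma}/(1-\gamma)^{1-\gamma}\big)^{\mu}\le e^{-\mu\gamma^{2}/2}$, where the final step is again a one-variable estimate. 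Since $\gamma^{2}/2\ge\gamma^{2}/3$, both tail bounds are at most $e^{-\mu\gamma^{2}/3}$, and a union bound over the events $\{X\ge(1+\gamma)\mu\}$ and $\{X\le(1-\gamma)\mu\}$ gives $\mathbb{P}(|X-\mu|\ge\gamma\mu)\le 2e^{-\mu\gamma^{2}/3}$, as required.

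None of this is genuinely hard — the statement is a textbook fact, cited as such in the excerpt — so there is no real obstacle. The only mildly delicate point is verifying the two elementary inequalities $e^{\gamma}/(1+\gamma)^{1+\gamma}\le e^{-\gamma^{2}/3}$ and $e^{-\gamma}/(1-\gamma)^{1-\gamma}\le e^{-\gamma^{2}/2}$ uniformly over $0<\gamma<1$, which is the single place where a careful (but routine) monotonicity argument replaces a one-line manipulation; alternatively one could simply invoke the cited reference and skip the proof entirely.
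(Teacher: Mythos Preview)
Your argument is correct and is exactly the standard exponential-moment derivation of the binomial Chernoff bound. The paper, however, does not prove this lemma at all: it simply quotes it as a standard fact with a reference to \cite{janson2011random} (Corollary~2.2 and Theorem~2.10) and moves on. So there is nothing to compare against; you have supplied a valid proof where the paper deliberately omits one, and your closing remark that ``one could simply invoke the cited reference and skip the proof entirely'' is precisely what the authors do.
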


We now give our partitioning lemma and a brief proof following~\cite{montgomery2024approximate}. This could also be done via the asymmetric local lemma (see Section~2.4.2 in \cite{montgomery2024approximate} for more discussion on this).

\begin{lemma}\label{lem:partitioning} Let $1/d\ll \gamma,p\leq 1/2$. Let $m\in \mathbb{N}$ and $p_1,\dots,p_m\geq p$ with $\sum_{i\in [m]}p_i=1$.
Let $G$ be a graph with $\Delta(G)\leq 2d$ containing non-empty vertex sets $A$ and $V$ with $d_G(v,A)\geq (1- \gamma)d$ for each $v\in V$.
Then, there is a partition $A=A_1\cup\dots\cup A_m$ such that
    \stepcounter{propcounter}
\begin{enumerate}[label = {{\emph{\textbf{\Alph{propcounter}\arabic{enumi}}}}}]
    \item \labelinthm{prop:vxdegs} for each $v\in V$ and $i\in [m]$, $d_G(v,A_i)\geq (1- 2\gamma)p_id$, and,
    \item \labelinthm{prop:setsizes} for each $i\in [m]$, $|A_i|=(1\pm \gamma)p_i|A|$.
\end{enumerate}
Moreover, if $d_G(v,A)\leq (1+\gamma)d$ for each $v\in V$, then we additionally have that
\begin{enumerate}[label = {{\emph{\textbf{\Alph{propcounter}\arabic{enumi}}}}}]\addtocounter{enumi}{2}
    \item \labelinthm{prop:vxdegsmore} for each $v\in V$ and $i\in [m]$, $d_G(v,A_i)\leq (1+ 2\gamma)p_id$.
    \end{enumerate}
\end{lemma}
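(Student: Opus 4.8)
The plan is to obtain the partition by assigning each $a\in A$ independently to a part $X_a\in[m]$ with $\mathbb{P}(X_a=i)=p_i$, setting $A_i=\{a\in A:X_a=i\}$, and applying the Lov\'asz local lemma (\cref{Lemma_local_symmetric}) to avoid a suitable list of bad events. Two preliminary observations set the parameters: since $V\neq\emptyset$ and $\gamma\le 1/2$, any $v\in V$ gives $|A|\ge d_G(v,A)\ge(1-\gamma)d\ge d/2$, and $\sum_i p_i=1$ with each $p_i\ge p$ forces $m\le 1/p$. For \ref{prop:vxdegs} and \ref{prop:vxdegsmore}, for each $v\in V$ and $i\in[m]$ I would use the bad event $B_{v,i}^-=\{d_G(v,A_i)<(1-\gamma)p_i d_G(v,A)\}$, and --- \emph{only} under the extra hypothesis that $d_G(v,A)\le(1+\gamma)d$ for all $v\in V$ --- also $B_{v,i}^+=\{d_G(v,A_i)>(1+\tfrac{\gamma}{2})p_i d_G(v,A)\}$. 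Since $d_G(v,A_i)$ is a sum of $d_G(v,A)$ independent Bernoulli$(p_i)$ variables, it is binomial with mean $p_i d_G(v,A)\ge pd/2$, so \cref{lem:chernoff} gives $\mathbb{P}(B_{v,i}^\pm)\le 2e^{-\Omega(\gamma^2 pd)}$; and avoiding $B_{v,i}^-$ forces $d_G(v,A_i)\ge(1-\gamma)^2 p_i d\ge(1-2\gamma)p_i d$, while avoiding $B_{v,i}^+$ forces $d_G(v,A_i)\le(1+\tfrac{\gamma}{2})(1+\gamma)p_i d\le(1+2\gamma)p_i d$ (using $\gamma\le 1$), which are exactly \ref{prop:vxdegs} and \ref{prop:vxdegsmore}.

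The only real subtlety is \ref{prop:setsizes}: the naive bad event ``$|A_i|\neq(1\pm\gamma)p_i|A|$'' is determined by all of $(X_a)_{a\in A}$ and hence shares variables with every degree event, so it is unusable in the symmetric local lemma once $V$, and thus the number of degree events, is large. To localise it, I would first fix any partition of $A$ into ``chunks'' $C_1,\dots,C_t$ each of size between $d/3$ and $d$ (which is possible since $|A|\ge d/2$), and then, for each $j\in[t]$ and $i\in[m]$, use the bad event that $|C_j\cap A_i|\neq(1\pm\gamma)p_i|C_j|$. This is again a deviation of a binomial variable from its mean $p_i|C_j|\ge pd/3$, so has probability $2e^{-\Omega(\gamma^2 pd)}$, and if all of these are avoided then $|A_i|=\sum_{j}|C_j\cap A_i|=(1\pm\gamma)p_i\sum_j|C_j|=(1\pm\gamma)p_i|A|$, which is \ref{prop:setsizes}. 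Crucially, as the chunks are disjoint and each has size at most $d$, the bad event for $(C_j,i)$ is determined by $(X_a)_{a\in C_j}$ alone.

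It remains to bound the dependency uniformly and verify the local lemma hypothesis. Every bad event is determined by the variables $(X_a)_{a\in S}$ for a set $S$ that is either a chunk $C_j$ or a neighbourhood $N_G(v)\cap A$; in either case $|S|\le\max(d,\Delta(G))\le 2d$, and each $a\in S$ lies in exactly one chunk and in $N_G(v')$ for at most $\Delta(G)\le 2d$ vertices $v'$. Hence each bad event involves the same variables as at most $O(d^2 m)$ others (the degree events of vertices $v'$ whose neighbourhood meets $S$, together with the chunk events of chunks meeting $S$), and since $m\le 1/p$ the dependency is at most $\Delta=O(d^2/p)$. As every bad event has probability at most $2e^{-c\gamma^2 pd}$ for an absolute constant $c>0$, \cref{Lemma_local_symmetric} applies once $e\cdot 2e^{-c\gamma^2 pd}(\Delta+1)\le 1$, which holds with $\Delta=O(d^2/p)$ provided $d$ is large enough in terms of $\gamma$ and $p$ --- since then the linear term $\gamma^2 pd$ dominates the logarithmic $\log(d/p)$ --- and this is exactly what $1/d\ll\gamma,p$ provides. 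The local lemma then yields an assignment avoiding every bad event, and the resulting $A=A_1\cup\dots\cup A_m$ satisfies \ref{prop:vxdegs}, \ref{prop:setsizes}, and (under the extra hypothesis) \ref{prop:vxdegsmore}. The main obstacle is precisely this handling of the global condition \ref{prop:setsizes} within the symmetric local lemma; once it is replaced by the local chunk events, everything else is a routine Chernoff-and-union calculation.
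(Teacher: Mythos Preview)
Your proof is correct and follows essentially the same approach as the paper: a random assignment with Chernoff bounds on the degree and chunk-intersection events, the chunk trick to localise \ref{prop:setsizes}, and the symmetric local lemma with a polynomial-in-$d$ dependency. The only differences are cosmetic (chunk sizes in $[d/3,d]$ versus $[d/2,d]$, the precise thresholds in the degree events, and a dependency bound of $O(d^2/p)$ versus $5d^3$), none of which affect the argument.
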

\begin{proof} Using, from the degree condition as $V\neq\emptyset$, that $|A|\geq (1-\gamma)d\geq d/2$, fix an arbitrary partition of $A=V_1\cup\dots\cup V_r$ into sets of size between $d/2$ and $d$, for some $r\in \mathbb{N}$. Also, define a random partition $A_1,\dots,A_m$ of $A$ by, independently at random, choosing the set containing $v$ so that $\mathbb{P}(v\in A_i)=p_i$ for each $i\in [m]$. Note that $mp\leq 1$, so $m\leq d$.

For each $i\in [m]$ and $j\in [r]$, let $B_{i,j}$ be the event that $|A_i\cap V_j|\ne (1\pm \gamma)p_i|V_j|$. As $\bE|A_i\cap V_j|=p_i|V_j|$, $p_i\geq p$ and $|V_j|\geq d/2$, we have by \cref{lem:chernoff} that $\bP[B_{i,j}]\le 2\exp(-\gamma^2pd/6)$.
For each $v\in V$ and $i\in [m]$, let $B_{v,i}$ be defined as follows. If $d_G(v,A)\leq (1+\gamma)d$, then $B_{v,i}$ is the event that $d_G(v,A_i)\neq (1 \pm 2\gamma)p_id$; otherwise $B_{v,i}$ is the event that $d_G(v, A_i) < (1-2\gamma)p_id$. By \cref{lem:chernoff}, we have that $\bP[B_{v,i}]\le 2\exp(-\gamma^2pd/6)$.

Now, for each $i\in [m]$ and $j\in [r]$, $B_{i,j}$ depends only on the placement of the vertices in $V_j$ in the partition $A=A_1\cup\dots \cup A_m$, and therefore is independent of any events $B_{i',j'}$ and $B_{v,i''}$ if $j'\neq j$ and $N_{G}(v)\cap V_j=\emptyset$, respectively.
Thus, $B_{i,j}$ is independent of all but at most $m+(2d)dm\leq 3d^3$ of the events $B_{i',j'}$ and $B_{v,i''}$.
Similarly, for every $v\in V(G)$ and $i\in [m]$, $B_{v,i}$ is independent of any events $B_{i',j}$ and $B_{v',i''}$ if $N_G(v)\cap V_{j}=\emptyset$ and $N_G(v)\cap N_G(v')=\emptyset$, respectively. Thus, $B_{v,i}$ is independent of all but at most $2dm+(2d)^2m\leq 5d^3$ of the events $B_{i',j}$ and $B_{v',i''}$.

Thus, every event in $\{B_{i,j}:i\in [m],j\in[r]\}\cup\{B_{v,i}:v\in V(G), i\in [m]\}$ is  mutually independent of all but $5d^3$ of the other events. As $1/d\ll \gamma,p$, we have $e(5d^3+1)\cdot 2\exp(-\gamma^2pd/6)\leq 1$. Thus, by \cref{Lemma_local_symmetric}, there is a partition $A=A_1\cup\dots\cup A_m$ such that none of the events $B_{i,j}$ and $B_{v,i}$ occur. Then \ref{prop:vxdegs} holds immediately, \ref{prop:vxdegsmore} holds if $d_G(v,A)\leq (1+\gamma)d$ for each $v\in V$, and, as   $|A_i|=\sum_{j=1}^{r} |A_i\cap V_j|=\sum_{j=1}^{r}(1\pm \gamma)p_i|V_j|=(1\pm \gamma)p_i|A|$ for every $i\in [m]$, \ref{prop:setsizes} also holds.
\end{proof}


\subsection{Partitioning into paths}\label{sec:partpartition}
We now show that a nearly regular subgraph of $G$ has a collection of vertex-disjoint paths of a specified length that cover nearly all the vertices of the subgraph, using an approach sketched and contextualised in Section~\ref{sec:outline}.

\begin{lemma}\label{lem:mostlydecompintopaths} Let $1/d \ll \gamma\ll 1/m,\eps \leq 1$ and $q\geq 1/2$. Let $G$ be an $n$-vertex graph with $\Delta(G) \leq d$ containing a vertex set $V \subset V(G)$ with $|V|=(1\pm \gamma)qn$ and $d_G(v,V)=(1\pm \gamma)qd$ for each $v\in V(G)$. Let $t=\lceil(1-\eps)qn/m\rceil$.

Then, $G[V]$ contains vertex-disjoint paths ${P}_1,\dots, {P}_{t}$, each of length $m-1$, such that each vertex in $G$ has at most $4d/m$ neighbours in $G$ among the endvertices of $P_1,\dots,P_t$.
\end{lemma}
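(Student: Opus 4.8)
The plan is to slice $V$ into $m$ nearly-equal ``layers'' $V_1,\dots,V_m$, take a near-perfect matching $M_i$ between each consecutive pair $V_i,V_{i+1}$, and observe that the union $M:=M_1\cup\dots\cup M_{m-1}$ is a disjoint union of paths, each threading monotonically through a run of consecutive layers using one vertex per layer. The components of $M$ of length exactly $m-1$ are then precisely those meeting every layer, and I will argue there are at least $t$ of them; any $t$ of these will be the desired $P_1,\dots,P_t$. Since such a path has both endvertices in $V_1\cup V_m$, into which every vertex of $G$ has only about $2qd/m$ neighbours, the bound $4d/m$ on neighbours among the endvertices follows at once.

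To build the layers I would apply \cref{lem:partitioning} with its ``$A$'' equal to $V$, its ``$V$'' equal to $V(G)$ (valid because $d_G(v,V)=(1\pm\gamma)qd$ for every $v\in V(G)$), its ``$d$'' equal to $qd$ (so $\Delta(G)\le d\le 2qd$ since $q\ge 1/2$, and $1/(qd)\ll\gamma,1/m$), and $p_1=\dots=p_m=1/m$; we may assume $m\ge 2$, the case $m=1$ being trivial. Both the lower bound~\ref{prop:vxdegs} and, since $d_G(v,V)\le(1+\gamma)qd$, the upper bound~\ref{prop:vxdegsmore} then apply, giving $V=V_1\cup\dots\cup V_m$ with $d_G(v,V_i)=(1\pm 2\gamma)qd/m$ for all $v\in V(G)$, $i\in[m]$, and, by~\ref{prop:setsizes} with $|V|=(1\pm\gamma)qn$, $|V_i|=(1\pm 3\gamma)qn/m$. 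For each $i\in[m-1]$, the bipartite graph $G[V_i,V_{i+1}]$ then has all degrees in $[(1-2\gamma)qd/m,(1+2\gamma)qd/m]$, so double counting edges between any $S\subseteq V_i$ and $N(S)$ gives $|N(S)|\ge\frac{1-2\gamma}{1+2\gamma}|S|\ge(1-4\gamma)|S|$; by the deficiency form of Hall's theorem, $G[V_i,V_{i+1}]$ has a matching $M_i$ leaving at most $4\gamma|V_i|\le 5\gamma qn/m$ vertices of $V_i$ unmatched.

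Now consider $M=M_1\cup\dots\cup M_{m-1}$. A vertex of $V_i$ meets at most one edge of $M_{i-1}$ and at most one of $M_i$ and no other matching, so $\Delta(M)\le 2$; and no component of $M$ is a cycle, because a vertex of such a cycle in the lowest layer it meets would have both of its two cycle-edges going one layer up, whereas at most one edge of the matching between those layers is incident to it. Hence every component of $M$ is a path, and the same ``one edge per matching'' principle forbids a path of $M$ from reversing direction among the layers, so each such path occupies a run of consecutive layers with one vertex each, and therefore has length $m-1$ exactly when it meets both $V_1$ and $V_m$. To count these full-length paths, follow from each $v_1\in V_1$ by repeatedly moving to the $M_i$-partner in the next layer; this reaches $V_m$ unless at some step $i\le m-1$ it lands on a vertex of $V_i$ unmatched by $M_i$, and since the partial maps $v_1\mapsto(\text{vertex reached in }V_i)$ are injective, the number of bad $v_1$ is at most $\sum_{i=1}^{m-1}5\gamma qn/m\le 5\gamma qn$. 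Thus there are at least $|V_1|-5\gamma qn\ge(1-3\gamma-5\gamma m)qn/m$ full-length paths, and this exceeds $t=\lceil(1-\eps)qn/m\rceil$ since $\gamma\ll 1/m,\eps$ yields $3\gamma+5\gamma m\le\eps/2$ while $1/d\ll\gamma$ makes $qn/m\ge d/(2m)$ large compared with $1/\eps$. Finally, the endvertices of the chosen $P_1,\dots,P_t$ all lie in $V_1\cup V_m$, so any $v\in V(G)$ has at most $d_G(v,V_1)+d_G(v,V_m)\le 2(1+2\gamma)qd/m\le 4d/m$ of them as neighbours, using that $|V|\le n$ forces $q\le(1-\gamma)^{-1}$, which is close to $1$.

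The one genuinely delicate point is the accumulation of error over the $m$ layers: each $M_i$ may miss a $\Theta(\gamma)$-fraction of $V_i$, so a $\Theta(\gamma m)$-fraction of the vertices of $V_1$ may fail to start a full-length path, and this loss must remain well below the permitted slack $\eps|V_1|$ -- which is precisely why the hypotheses put $\gamma$ far below both $1/m$ and $\eps$ (and $1/d$ far below $\gamma$, so that $d/m$ dominates $1/\eps$). Everything else is routine: \cref{lem:partitioning} (an application of the local lemma) supplies the layered partition, Hall's theorem supplies the matchings, and identifying a union of matchings between consecutive layers as a disjoint union of monotone layered paths is a short local argument.
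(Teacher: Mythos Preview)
Your proof is correct and follows essentially the same approach as the paper: partition $V$ into $m$ layers via \cref{lem:partitioning}, take near-perfect matchings between consecutive layers, observe their union decomposes into monotone layered paths, count those of full length $m-1$, and use that all endvertices lie in $V_1\cup V_m$ for the degree bound. The only difference is that the paper obtains the large matchings via Vizing's theorem (bounding $e(H_i)/(\Delta(H_i)+1)$) whereas you use the deficiency form of Hall's theorem; both are standard and yield the same conclusion here.
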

\begin{proof}
Applying \cref{lem:partitioning} to $V$ with $p_i=1/m$ for each $i\in [m]$, we obtain a partition $V=V_1\cup\dots\cup V_m$ such that the following hold.
    \stepcounter{propcounter}
\begin{enumerate}[label = {{{\textbf{\Alph{propcounter}\arabic{enumi}}}}}]
    \item \label{prop:degreesforpaths} For each $v\in V(G)$ and $i\in[m]$, we have $d_G(v,V_i)=(1\pm 2\gamma)qd/m$.
    \item \label{prop:sizesforpaths} For each $i\in [m]$, we have $|V_i|=(1\pm \gamma)(1\pm\gamma)qn/m=(1\pm 3\gamma)qn/m$.
\end{enumerate}
For each $i\in[m-1]$, let $H_i$ denote the bipartite graph $G[V_i,V_{i+1}]$. By \ref{prop:degreesforpaths} and \ref{prop:sizesforpaths}, we have $e(H_i)\ge((1-2\gamma)qd/m)\cdot (1-3\gamma)q n/m\ge (1-5\gamma)q^2nd/m^2$. Hence, by Vizing's theorem, we can find a matching $M_i$ in $H_i$ of size
\begin{align*}
    |M_i|\ge \frac{e(H_i)}{\Delta(H_i)+1}&\overset{\ref{prop:degreesforpaths}}{\ge} \frac{(1-5\gamma)q^2nd}{m^2}\cdot\frac{m}{(1+2\gamma)qd+m}\\
    &\ge \frac{(1-5\gamma)qn}{(1+3\gamma)m}\ge (1-8\gamma)\frac{qn}{m}\overset{\ref{prop:sizesforpaths}}{\ge} (1-12\gamma)\max\{|V_i|,|V_{i+1}|\}.
\end{align*}
Consider $M_1\cup \dots \cup M_{m-1}$, which is the disjoint union of paths. Note that any of these paths with length less than $m-1$ must have an endvertex in $V_i\setminus V(M_{i})$ or $V_{i+1}\setminus V(M_{i+1})$ for some $i\in [m-1]$. Therefore, the number of paths in $M_1\cup \dots \cup M_{m-1}$ with length $m-1$ is at least
\[
|V_1|-12\gamma\cdot \sum_{i=1}^{m-1}(|V_i|+|V_{i+1}|)\overset{\ref{prop:sizesforpaths}}{\ge}(1-3\gamma)\cdot\frac{qn}{m}-2(m-1)\cdot 12\gamma (1+3\gamma)\cdot\frac{qn}{m}\geq (1-\eps)\cdot\frac{qn}{m}.
\]
Thus we can take vertex-disjoint paths $P_1,\dots, P_t$, each with length $m-1$, in $M_1\cup \dots \cup M_{m-1}$. By construction, the endvertices of all these paths are in $V_1\cup V_m$. Thus, by \ref{prop:degreesforpaths}, each vertex of $G$ has at most $2(1+ 2\gamma)qd/m\le 4d/m$ neighbours among these endvertices, as required.
\end{proof}


\section{Proof of Theorem~\ref{thm:packing}}
\label{sec:mainproof}

For technical reasons, we need to be slightly careful with graphs $F$ that have isolated vertices. To deal with these, we first state a result for graphs without isolated vertices, before inferring \cref{thm:packing}. The following lemma is the crux of our argument.
\begin{lemma}
\label{lem:no_isolated_vertices_packing}
Let $F$ be any fixed graph with no isolated vertices and $e(F)\geq 1$. For each $\nu > 0$, there is some $\gamma>0$ and $d_0$ such that the following holds for each $n \geq d \geq d_0$. Every $n$-vertex graph $G$ with $d_G(v)=(1\pm \gamma)d$ for each $v\in V(G)$ contains a vertex-disjoint collection of $F$-subdivisions which covers at least $(1-\nu)n$ vertices.
\end{lemma}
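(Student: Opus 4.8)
The plan is to follow the strategy sketched in \cref{sec:outline}. Write $d_F$ for the constant of \cref{thm:mader} and put $f=v(F)$ and $e=e(F)\ge 1$. Given $\nu$, introduce auxiliary constants $\nu'=p=\eps=\nu/8$, a large constant $C=C(F,\nu)$ (of order $d_F/\nu'$), a large constant $m=m(F,\nu)$, and small $\gamma=\gamma(F,\nu)\ll 1/m$, and take $d_0=d_0(F,\nu)$ large; these are chosen in this order so that all the estimates below hold, and $\gamma$ and $d_0$ are the outputs. First I would apply \cref{lem:partitioning} with $A=V(G)$ and two parts of probabilities $1-p$ and $p$ to split $V(G)=V\cup W$ with $d_G(v,V)=(1\pm2\gamma)(1-p)d$ and $d_G(v,W)=(1\pm2\gamma)pd$ for every $v$ and $|W|=(1\pm\gamma)pn$. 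Then, after a harmless rescaling of $d$ (as $\Delta(G)\le(1+\gamma)d$), \cref{lem:mostlydecompintopaths} yields a vertex-disjoint family $\mathcal P=\{P_1,\dots,P_t\}$ of paths in $G[V]$, each of length $m-1$, with $t=\lceil(1-\eps)(1-p)n/m\rceil$, such that every vertex of $G$ has at most $8d/m$ neighbours among the endvertices of the $P_i$. In particular $\mathcal P$ covers all but at most $(\eps+p)n$ vertices of $G$.

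Call a family $\mathcal F$ of vertex-disjoint $F$-subdivisions \emph{structured} if each $F'\in\mathcal F$ has all of its branch- and subdivision-vertices in $W$ and all of its subdivision-paths formed by concatenating paths of the form $xP_iy$ with $P_i\in\mathcal P$ and $x,y\in W$ adjacent to the two ends of $P_i$; equivalently, such an $F'$ meets $V$ in a union of members of $\mathcal P$ and is the blow-up — replacing each edge by a path of length $m+1$ — of an $F$-subdivision living in an auxiliary graph on a subset of $W$. I would fix a $\subseteq$-maximal structured family $\mathcal F$ and let $\mathcal P'\subseteq\mathcal P$ be the members not used by any $F'\in\mathcal F$. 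Since $\mathcal F$ covers at least $(t-|\mathcal P'|)m$ vertices and $tm\le n$, it suffices to prove that $|\mathcal P'|\le\nu' t$: then the number of uncovered vertices is at most $(\eps+p)n+\nu' tm\le\nu n$. Put $\mathcal W^*=W\cap\bigcup_{F'\in\mathcal F}V(F')$ and $U=W\setminus\mathcal W^*$. Two counting observations feed into the proof of $|\mathcal P'|\le\nu' t$. First, an $F'$ using $K$ members of $\mathcal P$ has $Km$ vertices in $V$ and $f+K-e$ vertices in $W$; since the $V$-parts of the $F'$ are disjoint, $\sum_{F'}K\le|V|/m\le n/m$, whence $|\mathcal W^*|\le(f+1)n/m$ and so $|U|\ge pn/2$ (using $m$ large). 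Secondly, calling an endvertex $a$ of a path in $\mathcal P'$ \emph{bad} if $d_G(a,\mathcal W^*)\ge d_G(a,W)/2$, a double count of edges between bad endvertices and $\mathcal W^*$ — using that each vertex of $\mathcal W^*$ has at most $8d/m$ neighbours among path-endvertices — bounds the number of bad endvertices by $\nu' t/2$ (using $m$ large). Hence if $|\mathcal P'|\ge\nu' t$ then at least $\nu' t/2$ members of $\mathcal P'$, say forming a set $\mathcal Q_0$, have both endvertices \emph{good}, a good endvertex $a$ satisfying $d_G(a,U)\ge d_G(a,W)/2\ge pd/4$.

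It remains to derive a contradiction from $|\mathcal P'|\ge\nu' t$. Pick a uniformly random $U'\subseteq U$ with $|U'|=s:=\lceil Cn/d\rceil$. For $P\in\mathcal Q_0$ with ends $a,b$, each of $|N_G(a)\cap U'|$ and $|N_G(b)\cap U'|$ has expectation between $C/8$ and $8C$ (from $pd/4\le d_G(a,U)\le 2pd$ and $pn/2\le|U|\le 2pn$), so by a Chernoff bound (\cref{lem:chernoff}) each of them is within a factor $2$ of its mean with probability $1-e^{-\Omega(C)}$; call $P$ \emph{nice} (for this choice of $U'$) if both are. As $C$ is large, in expectation at least $\tfrac34|\mathcal Q_0|$ paths of $\mathcal Q_0$ are nice, so I may fix $U'$ for which the set $\mathcal Q$ of nice paths has $|\mathcal Q|\ge\nu' t/4$. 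Now form the bipartite graph $\mathcal B$ between, on one side, the pairs $\{x,y\}\subseteq U'$ \emph{realised} by some $P\in\mathcal Q$ (meaning one end of $P$ is adjacent to $x$ and the other to $y$) and, on the other side, $\mathcal Q$ itself, joining each realised pair to every $P\in\mathcal Q$ realising it. Every $P\in\mathcal Q$ realises between $c_1C^2$ and $c_2C^2$ pairs for absolute constants $c_1,c_2>0$, so the degrees on the $\mathcal Q$-side are at most $c_2C^2$, while each pair $\{x,y\}$ is realised by at most $8d/m$ members of $\mathcal Q$ (since $x$ has at most $8d/m$ neighbours among path-endvertices). A maximal matching $M$ in $\mathcal B$ therefore satisfies
\[
|M|\ \ge\ \frac{e(\mathcal B)}{c_2C^2+8d/m}\ \ge\ \frac{c_1|\mathcal Q|\,C^2}{16d/m}\ \ge\ \frac{c_1\nu' t\,C^2 m}{64d}\ \ge\ \frac{d_F s}{2},
\]
where the second inequality uses $d\gg C^2m$, the third $|\mathcal Q|\ge\nu' t/4$, and the last uses $t\ge n/(4m)$ together with $C$ being a large enough multiple of $d_F/\nu'$ and $d$ large. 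Let $L$ be the graph on $U'$ whose edges are the pairs saturated by $M$, and for each edge $xy$ of $L$ let $P_{xy}\in\mathcal Q$ be the path it is matched to. Then $L$ has average degree at least $d_F$, so by \cref{thm:mader} it contains an $F$-subdivision $F''$; replacing each edge $xy$ of $F''$ by the path $xP_{xy}y$ produces an $F$-subdivision $F^*$ of $G$ whose branch- and subdivision-vertices lie in $U'\subseteq W$ and which meets $V$ in a union of distinct members of $\mathcal Q\subseteq\mathcal P'$. Thus $F^*$ is of the required form and is vertex-disjoint from every member of $\mathcal F$ — its $W$-part lies in $U$, avoiding $\mathcal W^*$, and its $V$-part lies in unused members of $\mathcal P$ — so $\mathcal F\cup\{F^*\}$ is a larger structured family, contradicting the maximality of $\mathcal F$. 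Hence $|\mathcal P'|\le\nu' t$, completing the proof.

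The step I expect to be the main obstacle is this last one — the choice of $U'$ and the lower bound $e(L)\ge d_F|U'|/2$. There is a real tension: $|U'|$ must be at least of order $n/d$ for the endvertices of the unused paths to have \emph{any} neighbours in $U'$, yet then the condition $e(L)\ge d_F|U'|/2$ needed for \cref{thm:mader} demands about $d_Fn/d$ edges of $L$, while the unused good paths number only about $\nu' n/m$ and a crude count gives only about $\nu' n/d$ edges of $L$ — too few once $\nu'\ll d_F$. The resolution, reflected above, is to take $|U'|$ of order $Cn/d$ for a large constant $C$ proportional to $d_F/\nu'$, and to use that a typical nice unused path reaches about $C$ vertices of $U'$ at each end and hence supplies about $C^2$ candidate edges of $L$; this extra factor of $C^2$ in $e(\mathcal B)$ exactly offsets the larger $|U'|$. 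Making the probabilistic selection of $U'$ precise, checking that $m$ is large enough for the two counting bounds and that $d_0$ is large enough for all the concentration and ``$d\gg\cdots$'' estimates, and carrying the earlier rescalings through \cref{lem:partitioning,lem:mostlydecompintopaths}, is where most of the remaining work lies.
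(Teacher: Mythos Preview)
Your proposal is correct and follows the same global architecture as the paper's proof: split $V(G)=V\cup W$ via \cref{lem:partitioning}, cover most of $G[V]$ by length-$(m-1)$ paths via \cref{lem:mostlydecompintopaths}, take a maximal structured family of $F$-subdivisions, and reach a contradiction by building an auxiliary graph $L$ on a small subset $U'\subset U\subset W$ and invoking \cref{thm:mader}.

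The one genuine point of divergence is precisely the step you flag as the main obstacle. You take $|U'|\approx Cn/d$ with $C\sim d_F/\nu'$, show via Chernoff that most unused good paths become ``nice'' with $\Theta(C)$ neighbours at each end in $U'$, and then run a maximal-matching count in the bipartite graph $\mathcal B$ between realised pairs and nice paths; the $C^2$ amplification from each nice path is what pushes $e(L)\ge d_F|U'|/2$. The paper sidesteps this balancing act entirely: it takes $U'$ of size $\le\eps n$ via \cref{lem:partitioning} (so every good endvertex has $\ge\eps d/10$ neighbours in $U'$ deterministically), defines $L$ to be \emph{maximal} subject to an injection $f:E(L)\to J'$, and argues that if $L$ had no $F$-subdivision then $e(L)<|J'|$, so some $j\in J'\setminus f(E(L))$ exists---whereupon maximality of $L$ forces $L[N(x_j)\cap U',\,N(y_j)\cap U']$ to be complete bipartite of side $\ge\lfloor\eps d/20\rfloor\ge|F|^2$, which itself contains a $TF$, a contradiction. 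So the paper never needs to count edges of $L$ from below; the density comes for free from the maximality of $L$ combined with a single leftover path. Your route is more hands-on and requires the extra parameter $C$ and the expectation argument, while the paper's double-maximality trick is shorter and avoids the tension you describe between $|U'|$ and the Mader threshold altogether.
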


We now derive the main theorem assuming Lemma \ref{lem:no_isolated_vertices_packing}, as noted at the end of~\Cref{sec:outline}.
\begin{proof}[Proof of \cref{thm:packing}] Note that we can assume $e(F)\geq 1$, for otherwise the theorem is trivially true. So with $F$ and $\eta>0$ as given, let $H$ be the graph obtained from $F$ by deleting all of its isolated vertices.
Applying Lemma~\ref{lem:no_isolated_vertices_packing} with $H$ as the fixed graph without isolated vertices and $\nu = \eta / 2$, we obtain $\gamma>0$ and $d_0$ satisfying the statement of the lemma, and we may moreover assume that $\gamma \leq \eta/2$ and $d_0 \geq 2|F| / (\eta\gamma)$.

Now, let $G$ be any $n$-vertex $d$-regular graph with $n \geq d \geq d_0$. Let $S$ be a set of $\lfloor\gamma d\rfloor$ vertices in $G$. Note that $\tilde G:= G- S$ has $d_{\tilde{G}}(v)=(1\pm \gamma)d$ for each $v\in V(\tilde{G})$. By Lemma~\ref{lem:no_isolated_vertices_packing}, we can cover all but at most $\nu n$ vertices of $\tilde{G}$ with vertex-disjoint $H$-subdivisions, $T_1, \dots, T_t$ say. Assume that these are indexed so that $|T_1| \geq \dots \geq |T_t|$.

For each $j\in [t]$, let $z_j = \sum_{i=1}^j \big(|T_i|  + |F| - |H|\big)$, and let $\ell$ be the minimum index such that $z_\ell \geq (1-\eta)n$. We note that such an $\ell$ exists as we have $\sum_{i=1}^{t}|T_i| \geq |\tilde{G}|-\nu n\geq n - \nu n - |S| \geq (1-\eta)n$. If $z_\ell \leq n$, then we can obtain our desired family of subdivisions of $F$ by taking $T_1, \dots, T_\ell$ and adding $|F|-|H|$ distinct isolated vertices from $V(G) \setminus \bigcup_{i\in[\ell]} V(T_i)$ to each $T_i$ with $i \leq \ell$. To finish, assume for a contradiction that $z_\ell > n$. 

By the minimality of $\ell$, we have that $z_{\ell-1} < (1-\eta)n$ (where we define $z_0 :=0$). Then,
$$|T_\ell| + |F| - |H|=z_\ell-z_{\ell-1} > \eta n,$$ and so $|T_\ell| > \eta n/2$ by our assumption on $d_0$. Since $|T_1| \geq \dots \geq |T_\ell|$, that implies $\ell \leq 2/\eta$. But then, using $d\geq d_0 \geq 2|F| / (\eta\gamma)$,
$$z_\ell= \sum_{i=1}^\ell|T_i|+ \sum_{i=1}^{\ell} \big( |F| - |H|\big) \leq \sum_{i=1}^t|T_i|+\frac{2\big( |F| - |H|\big)}{ \eta} \leq \sum_{i=1}^t|T_i|+\lfloor \gamma d\rfloor = \sum_{i=1}^t|T_i|+|S|\leq n,$$
which contradicts $z_\ell > n$.
\end{proof}

We now return to the central lemma, whose proof will now complete the proof of Theorem~\ref{thm:packing}.
\begin{proof}[Proof of Lemma~\ref{lem:no_isolated_vertices_packing}] Given the graph $F$ with no isolated vertices and $e(F)\geq 1$ and $\nu>0$, take $\gamma,\varepsilon,p\in(0,1)$ and $d_0,m\in\mathbb{N}$ satisfying $1/d_0\ll \gamma\ll \varepsilon\ll 1/m\ll p\ll \nu,1/|F|$. We will show that the property in the lemma holds for $d_0$ and $\gamma$. So fix any $n$ and $d$ satisfying $n\geq d\geq d_0$ and let $G$ be any $n$-vertex graph satisfying  $d_G(v)=(1\pm \gamma)d$ for each $v\in V(G)$.

Using Lemma~\ref{lem:partitioning}, take a partition $V(G)=V\cup W$ so that the following hold.
\stepcounter{propcounter}
\begin{enumerate}[label = {{{\textbf{\Alph{propcounter}\arabic{enumi}}}}}]
\item \label{prop:degs_into_Wi}\label{prop:degs_into_V} For each $v\in V(G)$, $d_G(v,V)=(1\pm 2\gamma)(1-p)d$ and $d_G(v,W)=(1\pm 2\gamma)pd$.
\item \label{prop:sizes_of_sets} $|V|=(1\pm \gamma)(1-p)n$ and $|W|=(1\pm \gamma)pn$.
\end{enumerate}

Let $t=\lceil (1-2p)n/m\rceil$ and $\beta=4/m$. By \ref{prop:degs_into_V}, \ref{prop:sizes_of_sets} and Lemma~\ref{lem:mostlydecompintopaths}, $G[V]$ contains vertex-disjoint paths ${P}_1,\dots, {P}_{t}$,
each of length $m-1$, such that each vertex in $G$ has at most $\beta d=4d/m$ neighbours in $G$ among the endvertices of $P_1,\dots,P_t$.

Let $\mathcal{F}$ be a collection of vertex-disjoint $F$-subdivisions in $G$ which maximises $|\mathcal{F}|$ subject to the following holding for each $H\in \mathcal{F}$.
    \stepcounter{propcounter}
\begin{enumerate}[label = {{{\textbf{\Alph{propcounter}\arabic{enumi}}}}}]
        \item For each $j\in [t]$, $V(H)\cap V(P_j)=\emptyset$ or $V(P_j)\subset V(H)$.\label{prop:allornothingpathcontainment}
        \item $|V(H)\cap W|\leq 2|H|/m$.\label{prop:HiuseslittleofWi}
    \end{enumerate}

Let $J$ be the set of $j\in [t]$ such that $V(P_j)\cap \big(\bigcup_{H\in \mathcal{F}}V(H)\big)=\emptyset$. To complete the proof, it will suffice to verify the following claim.

\begin{claim} $|J|\leq pn/m$.
\end{claim}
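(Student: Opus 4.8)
The plan is to assume for contradiction that $|J| > pn/m$ and then produce one further $F$-subdivision $F'$ in $G$ that is vertex-disjoint from every $H \in \mathcal{F}$ and still satisfies \ref{prop:allornothingpathcontainment} and \ref{prop:HiuseslittleofWi}; then $\mathcal{F} \cup \{F'\}$ contradicts the maximality of $|\mathcal{F}|$. As in the outline, $F'$ will be built by finding an $F$-subdivision $F''$ in an auxiliary graph $L$ whose vertex set is a small random subset of $U := W \setminus \bigcup_{H \in \mathcal{F}} V(H)$ (the uncovered part of $W$), whose edges correspond injectively to some of the unused paths $\{P_j : j \in J\}$, and then replacing each edge $xy$ of $F''$ by a path $x\, a_j P_j b_j\, y$, where $P_j$ is the path assigned to $xy$ and $a_j \sim x$, $b_j \sim y$ are its endvertices.

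First I would record that $U$ is large: since the $H \in \mathcal{F}$ are vertex-disjoint, \ref{prop:HiuseslittleofWi} gives $|W \setminus U| = \sum_{H \in \mathcal{F}} |V(H) \cap W| \le (2/m) \sum_{H \in \mathcal{F}} |H| \le 2n/m$, so by \ref{prop:sizes_of_sets} and $1/m \ll p$ we have $|U| \ge (1-\gamma)pn - 2n/m \ge pn/2$. A first-moment count then controls the path endvertices: the number of edges of $G$ meeting $W \setminus U$ is at most $(1+\gamma)d\,|W \setminus U| \le 3nd/m$ (using $\Delta(G) \le (1+\gamma)d$), so, combined with \ref{prop:degs_into_V} (each endvertex of each $P_j$ has $(1 \pm 2\gamma)pd$ neighbours in $W$), all but a controlled number of $j \in J$ have both endvertices of $P_j$ with at least, say, $pd/3$ neighbours in $U$; write $J'$ for the set of such $j$. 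Now take $U' \subseteq U$ at random, including each vertex independently with an appropriate small probability $q$ (of order $1/(pd)$), so that $\mathbb{E}|U'|$ is bounded while the endvertices with index in $J'$ keep neighbours in $U'$, and build $L$ on $U'$ greedily: processing the $j \in J'$, whenever possible add an edge $xy$ with $x \in N_G(a_j) \cap U'$, $y \in N_G(b_j) \cap U'$, $x \ne y$, not already present, recording the assignment $xy \mapsto P_j$. The fact that each vertex of $U$ has at most $4d/m$ neighbours among all path endvertices bounds how many $j \in J'$ can compete for the same potential edge, so a union bound together with \cref{lem:chernoff} and Markov's inequality shows that with positive probability $|U'|$ is bounded while $e(L) \ge d_F |U'|/2$; i.e.\ $L$ has average degree at least $d_F$ and hence, by \cref{thm:mader}, contains an $F$-subdivision $F''$ with $V(F'') \subseteq U'$.

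It remains to turn $F''$ into $F'$ and verify that $F'$ is admissible. Replacing each edge $xy$ of $F''$ by $x\, a_j P_j b_j\, y$ (for its assigned $P_j$) subdivides each edge of $F''$ exactly $m$ times and uses distinct paths on distinct edges, so $F'$ is an $F$-subdivision with $V(F') = V(F'') \cup \bigcup_{j \text{ used}} V(P_j)$, hence $|V(F') \cap W| = |V(F'')|$ and $|F'| = |V(F'')| + m\, e(F'')$. Since $V(F'') \subseteq U' \subseteq U$ avoids $\bigcup_H V(H)$ and every path $P_{j'}$, and each used $P_j$ (with $j \in J' \subseteq J$) avoids $\bigcup_H V(H)$ by the definition of $J$, the subdivision $F'$ is vertex-disjoint from $\mathcal{F}$ and contains each $P_{j'}$ either entirely or not at all, so \ref{prop:allornothingpathcontainment} holds. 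For \ref{prop:HiuseslittleofWi}, since $e(F'') = e(F) + |V(F'')| - |F|$, and $e(F) \ge |F|/2$ as $F$ has no isolated vertices, and $e(F'') \ge e(F)$, we get $|V(F'')| = e(F'') + (|F| - e(F)) \le 2e(F'')$, so $|V(F') \cap W| = |V(F'')| \le 2e(F'') \le 2|F'|/m$. Thus $\mathcal{F} \cup \{F'\}$ is a larger admissible collection, contradicting the maximality of $|\mathcal{F}|$.

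The main obstacle is the middle paragraph: guaranteeing that a bounded-size random $U' \subseteq U$ carries an auxiliary graph $L$ of average degree at least $d_F$. This hinges on keeping $|J'|$ large enough — for which the bound $|W \setminus U| \le 2n/m$ coming from \ref{prop:HiuseslittleofWi} is essential — and on calibrating $q$ and the target size of $U'$ against $d_F$, using the hierarchy $1/d \ll \gamma \ll 1/m \ll p$ and the $4d/m$ bound on neighbours among path endvertices so that the greedy construction of $L$ does not waste too many of the paths $P_j$, $j \in J'$, on already-present edges. Getting the precise threshold $pn/m$ (rather than a weaker bound of the same flavour) is exactly where one must be careful; once $L$, hence $F''$, hence $F'$, is in hand, the remaining verifications are bookkeeping.
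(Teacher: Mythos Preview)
Your overall architecture matches the paper's, and the bookkeeping in your final paragraph (disjointness, \ref{prop:allornothingpathcontainment}, and \ref{prop:HiuseslittleofWi} via $|V(F'')|\le 2e(F'')$) is correct. There are, however, two concrete gaps in the middle paragraph.

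First (minor but real): your first-moment count for $J'$ is too crude. Bounding edges from $W\setminus U$ by $(1+\gamma)d\,|W\setminus U|\le 3nd/m$ yields at most $6n/(pm)$ ``bad'' endpoints, which exceeds $2|J|>2pn/m$ since $p<1$; so you cannot even conclude $J'\ne\emptyset$. The paper instead uses the property you invoked only later, that each $w\in W\setminus U$ has at most $\beta d=4d/m$ neighbours among path endpoints. This gives $|X|\le |W\setminus U|\cdot\beta d/(pd/2)\le \beta^2 n/p$, and now $|J'|\ge |J|-|X|>pn/m-\beta^2 n/p\ge pn/(2m)$ because $1/m\ll p$.

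Second (the main point): with your choice $q=\Theta(1/(pd))$ the neighbourhoods $N_G(a_j)\cap U'$ and $N_G(b_j)\cap U'$ have \emph{constant} expected size, so a constant fraction of endpoints will have \emph{no} neighbours in $U'$, and ``Chernoff plus a union bound'' over all $j\in J'$ fails outright. Even if you fix that, you have not shown greedy reaches $e(L)\ge d_F|U'|/2$; the $4d/m$ competition bound limits how many $j$ fight over a fixed pair, but does not by itself force enough distinct pairs to be realised. The paper avoids this difficulty entirely: it takes $U'\subset U$ via \cref{lem:partitioning} with density $\eps/(2p)$ so that $|U'|\le\eps n$ while every $x_j,y_j$ with $j\in J'$ has at least $\eps d/10$ neighbours in $U'$; then it takes $L$ with a \emph{maximal} injection $f:E(L)\to J'$ and argues by a double contradiction. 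If $L$ has an $F$-subdivision, done. If not, Mader gives $e(L)\le |U'|/\beta\le \eps n/\beta<|J'|$, so some $j\in J'\setminus f(E(L))$ exists; by maximality of $L$, \emph{every} pair in $(N_G(x_j)\cap U')\times(N_G(y_j)\cap U')$ is already an edge of $L$, producing a complete bipartite subgraph with parts of size $\lfloor\eps d/20\rfloor\ge |F|^2$, which contains an $F$-subdivision --- contradiction. This maximality/complete-bipartite trick is the missing idea in your sketch.
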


Note that if the claim is true, then $\mathcal{F}$ is a collection of vertex-disjoint $F$-subdivisions in $G$ which, by \ref{prop:allornothingpathcontainment}, cover all the vertices of $P_i$, $i\in [t]\setminus J$. Hence, this would mean that
\[
\bigg|\bigcup_{H\in \mathcal{F}}V(H)\bigg|\geq (t-|J|)m\geq (1-2p)n-\frac{pn}{m}\cdot m=(1-3p)n\geq (1-\nu)n.
\]
That is, $\mathcal{F}$ covers all but at most $\nu n$ vertices in $G$, as required.

\medskip

\noindent\emph{Proof of claim.}
Suppose, for contradiction, that $|J|>pn/m$.
Let $U=W\setminus \big(\bigcup_{H\in \mathcal{F}}V(H)\big)$. Now, by \ref{prop:HiuseslittleofWi}, and as the graphs in $\mathcal{F}$ are vertex-disjoint,
\begin{equation}\label{eq:W-Uuppbound}
|W\setminus U|=\sum_{H\in \mathcal{F}}|V(H)\cap W|\leq \frac{2}{m}\sum_{H\in \mathcal{F}}|H|\leq \frac{2n}{m}.
\end{equation}
Let $X$ be the set of endvertices of the paths $P_i$, $i\in J$, which have at least $p d/2$ neighbours in $W\setminus U$. Then, as any vertex in $W\setminus U$ has at most $\beta d$ neighbours among the endvertices of the paths $P_i$, $i\in J$, we have that
\begin{equation}\label{eq:Xupperbound}
|X|\leq \frac{|W\setminus U|\cdot \beta d}{pd/2}\overset{\eqref{eq:W-Uuppbound}}{\leq} \frac{4n\cdot \beta d}{pdm}=\frac{\beta^2n}{p}.
\end{equation}

Label vertices so that, for each $i\in J$, $P_i$ is an $x_i,y_i$-path.
Let $J'$ be the set of $i\in J$ for which $x_i$ and $y_i$ are both not in $X$.  Then, as $\beta \ll p$,
\begin{equation}\label{eq:Jprimelower}
|J'|\geq |J|-|X|\overset{\eqref{eq:Xupperbound}}{>} \frac{pn}{m}-\frac{n\beta^2}{p}\geq \frac{pn}{2m}=\frac{\beta pn}{8}.
\end{equation}

For each $i\in J'$, by \ref{prop:degs_into_Wi} and the fact that  $x_i,y_i\notin X$, we have $d_G(x_i, U), d_G(y_i, U) \geq (1-2\gamma)pd-pd/2= (1 - 4\gamma)pd/2$. Then, using \ref{prop:sizes_of_sets}, we apply \cref{lem:partitioning} with $V=\{x_i,y_i:i\in J'\}$, $A=U$, $m'=2$, $d' = (1+ 2\gamma) pd/2$, $\gamma' = 6\gamma$, $p_1=\eps/2p$ (so $p_2 = 1 -p_1$), and $G'$ the graph $G[(\cup_{i\in J'}\{x_i,y_i\}) \cup U]$ with any edges in $G[\cup_{i\in J'}\{x_i,y_i\}]$ and $G[U]$ removed (note that $\Delta(G') \leq \max\{(1+2\gamma)pd,4d/m\} = 2\cdot (1+2\gamma)pd/2=2d'$). Thus, we can take a subset $U'\subset U$ with $|U'|\leq \eps n$ and $d_G(x_i,U'),d_G(y_i,U')\geq \eps d/10$ for each $i\in J'$.
Take an auxiliary graph $L$ with vertex set $U'$ for which the number of edges is maximal such that there is an injection $f:E(L)\to J'$ with the property that, for each $e\in E(L)$, there is a labelling of $e=e_xe_y$ so that $e_xx_{f(e)},e_yy_{f(e)}\in E(G)$.

We now show that $L$ has no $F$-subdivision. Indeed, suppose there is an $F$-subdivision $F'\subset L$. Then, let $H$ be the graph formed from $F'$ by replacing each edge $xy\in E(F')$ with an $x,y$-path with vertex set $\{x,y\}\cup V(P_{f(xy)})$ (possible by the choice of $f$). As $f$ is an injection, the internal vertices of these paths are disjoint, and thus $H$ is also an $F$-subdivision. As $V(L)=U'\subset U$ and the image of $f$ is contained in $J$, we have that $H$ is vertex-disjoint from each graph in $\mathcal{F}$. Finally, as $F$ has no isolated vertices, $|V(H)\cap U|=|F'|\leq 2e(F')\leq 2|H|/m$ and so $H$ satisfies \ref{prop:HiuseslittleofWi}. Note that, by construction, \ref{prop:allornothingpathcontainment} holds for $H$, and thus altogether we find that $\mathcal{F}\cup \{H\}$ contradicts the maximality of $\mathcal{F}$. Therefore, $L$ has no $F$-subdivision.

Thus, as $\eps\ll\beta\ll p\ll 1/|F|$ and $|U'|\leq \eps n$, from \cref{thm:mader} we have that $e(L)\leq |U'|/\beta\leq \beta p n/16$. Then,

\vspace{-0.5cm}

\[
|J'\setminus f(E(L))|= |J'|-e(L)\overset{\eqref{eq:Jprimelower}}{\geq} \beta pn/8-\beta pn/16= \beta p n/16>0,
\]
so we may pick $j\in J'\setminus f(E(L))$. As $j\in J'\setminus f(E(L))\subset J'$, both $x_j$ and $y_j$ have at least $\eps d/10$ neighbours in $U'$. Then, we can choose disjoint
sets $A_x,A_y\subset U'$, each with size $\lfloor \eps d/20\rfloor$, so that $A_x\subset N_G(x_j)$ and $A_y\subset N_G(y_j)$. The maximality property of $L$ implies that, for each $e_x\in A_x$ and $e_y\in A_y$, we have $e_xe_y\in E(L)$, for otherwise we could add $e_xe_y$ to $L$ and extend $f$ by setting $f(e_xe_y)=j$. Thus, $L[A_x,A_y]$ is a complete bipartite graph with $\lfloor \eps d/20\rfloor$ vertices in each part, and therefore is easily seen to contain an $F$-subdivision as $\lfloor \eps d/20\rfloor\geq |F|^2$, contradicting our previous deduction that $L$ has no $F$-subdivision. Thus, $|J|\leq \frac{pn}{m}$, as required.\hspace{3.5cm}$\boxdot$
\end{proof}

\paragraph{Acknowledgements.} This research was carried out at the workshop ``Extremal and Probabilistic Combinatorics'' held in July 2025 at the International Centre for Mathematical Sciences (ICMS) in Edinburgh. We would like to thank the ICMS and the other workshop participants for the wonderful research atmosphere. AR would like to additionally thank the Royal Society for financial support to attend the workshop.


\end{document}